\documentclass{amsart}
\usepackage{amssymb, latexsym}
\usepackage{url}

\DeclareMathOperator{\CM}{\mathrm{CM}}

\DeclareMathOperator{\norm}{\mathrm{norm}}
\DeclareMathOperator{\Norm}{\mathrm{Norm}}

\DeclareMathOperator{\Reg}{\mathrm{Reg}}

\DeclareMathOperator{\Tor}{\mathrm{Tor}}
\DeclareMathOperator{\Vol}{\mathrm{Vol}}

\begin{document}
 \bibliographystyle{plain}

 \newtheorem{theorem}{Theorem}[section]
 \newtheorem{lemma}{Lemma}[section]
 \newtheorem{corollary}{Corollary}[section]
 \newtheorem{conjecture}{Conjecture}
 \newtheorem{definition}{Definition}
 \newcommand{\mc}{\mathcal}
 \newcommand{\A}{\mc{A}}
 \newcommand{\B}{\mc{B}}
 \newcommand{\cc}{\mc{C}}
 \newcommand{\D}{\mc{D}}
 \newcommand{\E}{\mc{E}}
 \newcommand{\F}{\mc{F}}
 \newcommand{\G}{\mc{G}}
 \newcommand{\FN}{\F_n}
 \newcommand{\I}{\mc{I}}
 \newcommand{\J}{\mc{J}}
 \newcommand{\eL}{\mc{L}}
 \newcommand{\M}{\mc{M}}
 \newcommand{\eN}{\mc{N}}
 \newcommand{\qq}{\mc{Q}}
 \newcommand{\U}{\mc{U}}
 \newcommand{\V}{\mc{V}}
 \newcommand{\X}{\mc{X}}
 \newcommand{\Y}{\mc{Y}}
 \newcommand{\C}{\mathbb{C}}
 \newcommand{\R}{\mathbb{R}}
 \newcommand{\N}{\mathbb{N}}
 \newcommand{\Q}{\mathbb{Q}}
 \newcommand{\T}{\mathbb{T}}
 \newcommand{\Z}{\mathbb{Z}}
 \newcommand{\aA}{\mathfrak A}
 \newcommand{\bB}{\mathfrak B}
 \newcommand{\ff}{\mathfrak F}
 \newcommand{\ee}{\mathfrak E}
 \newcommand{\uU}{\mathfrak U}
 \newcommand{\fb}{f_{\beta}}
 \newcommand{\fg}{f_{\gamma}}
 \newcommand{\gb}{g_{\beta}}
 \newcommand{\vep}{\varepsilon}
 \newcommand{\vphi}{\varphi}
 \newcommand{\bo}{\boldsymbol 0}
 \newcommand{\bone}{\boldsymbol 1}
 \newcommand{\ba}{\boldsymbol a}
 \newcommand{\bb}{\boldsymbol b}
 \newcommand{\bc}{\boldsymbol c}
 \newcommand{\be}{\boldsymbol e}
 \newcommand{\bk}{\boldsymbol k}
 \newcommand{\bell}{\boldsymbol \ell}
 \newcommand{\bm}{\boldsymbol m}
 \newcommand{\bgamma}{\boldsymbol \gamma}
 \newcommand{\bt}{\boldsymbol t}
 \newcommand{\bu}{\boldsymbol u}
 \newcommand{\bv}{\boldsymbol v}
 \newcommand{\bx}{\boldsymbol x}
 \newcommand{\bwy}{\boldsymbol y}
 \newcommand{\bxi}{\boldsymbol \xi}
 \newcommand{\bbeta}{\boldsymbol \eta}
 \newcommand{\bw}{\boldsymbol w}
 \newcommand{\bz}{\boldsymbol z}
 \newcommand{\whG}{\widehat{G}}
 \newcommand{\oK}{\overline{K}}
 \newcommand{\oKt}{\overline{K}^{\times}}
 \newcommand{\oQ}{\overline{\Q}}
 \newcommand{\oq}{\oQ^{\times}}
 \newcommand{\oQt}{\oQ^{\times}/\Tor\bigl(\oQ^{\times}\bigr)}
 \newcommand{\ot}{\Tor\bigl(\oQ^{\times}\bigr)}
 \newcommand{\h}{\frac12}
 \newcommand{\hh}{\tfrac12}
 \newcommand{\dx}{\text{\rm d}x}
 \newcommand{\dbx}{\text{\rm d}\bx}
 \newcommand{\dy}{\text{\rm d}y}
 \newcommand{\dmu}{\text{\rm d}\mu}
 \newcommand{\dnu}{\text{\rm d}\nu}
 \newcommand{\dla}{\text{\rm d}\lambda}
 \newcommand{\dlav}{\text{\rm d}\lambda_v}
 \newcommand{\trho}{\widetilde{\rho}}
 \newcommand{\dtrho}{\text{\rm d}\widetilde{\rho}}
 \newcommand{\drho}{\text{\rm d}\rho}
 \def\today{\number\time, \ifcase\month\or
  January\or February\or March\or April\or May\or June\or
  July\or August\or September\or October\or November\or December\fi
  \space\number\day, \number\year}

\title[Heights]{Heights, Regulators and Schinzel's\\determinant inequality}
\author{Shabnam~Akhtari}
\author{Jeffrey~D.~Vaaler}
\subjclass[2000]{11J25, 11R04, 46B04}
\keywords{$S$-regulator, Weil height}
\thanks{This research was supported by NSA grant, H98230-12-1-0254.}

\address{Department of Mathematics, University of Oregon, Eugene, Oregon 97403 USA}
\email{akhtari@uoregon.edu}

\address{Department of Mathematics, University of Texas, Austin, Texas 78712 USA}
\email{vaaler@math.utexas.edu}
\numberwithin{equation}{section}

\maketitle

\begin{abstract}  We prove inequalities that compare the size of an $S$-regulator with a product of heights of
multiplicatively independent $S$-units.  Our upper bound for the $S$-regulator follows from a general upper bound for the determinant of
a real matrix proved by Schinzel.  The lower bound for the $S$-regulator follows from Minkowski's theorem on successive minima and
a volume formula proved by Meyer and Pajor.  We establish similar upper bounds for the relative regulator of an extension
$l/k$ of number fields.
\end{abstract}

\section{Introduction}

Let $k$ be an algebraic number field, $k^{\times}$ its multiplicative group of nonzero elements, and 
$h : k^{\times} \rightarrow [0, \infty)$ the absolute, logarithmic, Weil height.  If $\alpha$ belongs to $k^{\times}$ and $\zeta$ is a 
root of unity in $k^{\times}$, then the identity $h(\zeta \alpha) = h(\alpha)$ is well known.  It follows that the height $h$ is
constant on cosets of the quotient group
\begin{equation*}\label{intro2}
\G_k = k^{\times}/\Tor\bigl(k^{\times}\bigr).
\end{equation*}
Therefore the height is well defined as a map $h : \G_k \rightarrow [0, \infty)$. 

Let $S$ be a finite set of places of $k$ such that $S$ contains all the archimedean places.  Then
\begin{equation*}\label{intro7}
O_S = \big\{\gamma \in k : |\gamma|_v \le 1\ \text{for all places}\ v \notin S\big\}
\end{equation*}
is the ring of $S$-integers in $k$, and
\begin{equation}\label{intro9}
O_S^{\times} = \big\{\gamma \in k^{\times} : |\gamma|_v = 1\ \text{for all places}\ v \notin S \big\}
\end{equation}
is the multiplicative group of $S$-units in the ring $O_S$.  We write
\begin{equation}\label{intro10}
\Tor\bigl(O_S^{\times}\bigr) = \Tor\bigl(k^{\times}\bigr)
\end{equation}
for the torsion subgroup of $O_S^{\times}$, which is also the torsion subgroup of the multiplicative group $k^{\times}$.
As is well known, (\ref{intro10}) is a finite, cyclic group of even order, and
\begin{equation}\label{intro12}
\uU_S(k) = O_S^{\times}/\Tor\bigl(O_S^{\times}\bigr) \subseteq \G_k
\end{equation}
is a free abelian group of finite rank $r$, where $|S| = r + 1$.

In this paper we establish simple inequalities between the $S$-regulator $\Reg_S(k)$ and products of the form
\begin{equation*}\label{intro14}
\prod_{j = 1}^r \bigl([k : \Q] h(\alpha_j)\bigr),
\end{equation*}
where $\alpha_1, \alpha_2, \dots , \alpha_r$ are multiplicatively independent elements in the group $\uU_S(k)$.  

\begin{theorem}\label{thmintro1}  Let the multiplicative group of $S$-units $O_S^{\times}$ have positive rank $r$, and let
$\alpha_1, \alpha_2, \dots , \alpha_r$ be multiplicatively independent elements in the free group $\uU_S(k)$.  If 
$\aA \subseteq \uU_S(k)$ is the multiplicative subgroup generated by $\alpha_1, \alpha_2, \dots , \alpha_r$, then
\begin{equation}\label{intro58}
\Reg_S(k) \bigl[\uU_S(k) : \aA \bigr] \le \prod_{j = 1}^r \bigl([k : \Q] h(\alpha_j)\bigr).
\end{equation}
\end{theorem}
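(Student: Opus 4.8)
The plan is to convert both sides of \eqref{intro58} into a single $r\times r$ determinant and then apply Schinzel's determinant inequality. First I would fix an ordering $S = \{v_0, v_1, \dots, v_r\}$ and introduce the logarithmic map $\lambda_v(\alpha) = [k_v : \Q_v]\log|\alpha|_v$ for $\alpha \in O_S^{\times}$, where $|\cdot|_v$ is normalized so that these are the absolute values defining $\Reg_S(k)$. Since $|\alpha|_v = 1$ for $v \notin S$, the product formula gives $\sum_{v \in S}\lambda_v(\alpha) = 0$, so the vectors $(\lambda_v(\alpha))_{v \in S}$ lie in the hyperplane $\sum_v x_v = 0$ of $\R^S$. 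In this normalization one has the key identity
\begin{equation*}
[k : \Q]\, h(\alpha) = \sum_{v \in S}\max\{0, \lambda_v(\alpha)\} = \tfrac12 \sum_{v \in S}|\lambda_v(\alpha)|,
\end{equation*}
where the middle expression is the Weil height of an $S$-unit and the last equality uses $\sum_{v \in S}\lambda_v(\alpha) = 0$.

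Second, I would realize the left side of \eqref{intro58} as an absolute value of a determinant. Fixing a fundamental system $\eta_1, \dots, \eta_r$ of $\uU_S(k)$, the $S$-regulator is by definition $\Reg_S(k) = \bigl|\det\bigl(\lambda_{v_j}(\eta_i)\bigr)_{1 \le i,j \le r}\bigr|$, obtained by deleting the place $v_0$. Writing each generator as $\alpha_i = \prod_{l} \eta_l^{n_{il}}$ with integer matrix $N = (n_{il})$, multiplicative independence forces $\det N \ne 0$ and $|\det N| = \bigl[\uU_S(k) : \aA\bigr]$; since each $\lambda_v$ is a homomorphism, $\bigl(\lambda_{v_j}(\alpha_i)\bigr) = N\,\bigl(\lambda_{v_j}(\eta_i)\bigr)$, and taking determinants yields
\begin{equation*}
\Bigl|\det\bigl(\lambda_{v_j}(\alpha_i)\bigr)_{1 \le i,j \le r}\Bigr| = \bigl[\uU_S(k) : \aA\bigr]\,\Reg_S(k).
\end{equation*}

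Third, I would apply Schinzel's inequality to the $r \times (r+1)$ array $c_{ij} = \lambda_{v_j}(\alpha_i)$, $0 \le j \le r$. Each row sums to zero, and Schinzel's bound states precisely that for such an array the determinant obtained by deleting column $0$ satisfies
\begin{equation*}
\Bigl|\det\bigl(c_{ij}\bigr)_{1 \le i,j \le r}\Bigr| \le \prod_{i=1}^r \Bigl(\tfrac12 \sum_{j=0}^r |c_{ij}|\Bigr).
\end{equation*}
By the identity of the first paragraph the right side equals $\prod_{i=1}^r \bigl([k:\Q]\,h(\alpha_i)\bigr)$, and combining this with the determinant formula of the second paragraph gives \eqref{intro58}.

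The two translations are routine bookkeeping; the real engine is Schinzel's determinant inequality, and the one point requiring care is the normalization, since the bound is matched only when the half-$\ell^1$-norm $\tfrac12\sum_{v \in S}|\lambda_v(\alpha_i)|$ of each zero-sum logarithmic vector is identified exactly with $[k:\Q]\,h(\alpha_i)$. For orientation, Schinzel's inequality itself follows by multilinearity: after scaling each row to half-$\ell^1$-norm $1$, the determinant is linear in each row over the polytope of zero-sum vectors of $\ell^1$-norm $2$, whose vertices are the differences $e_a - e_b$ of standard basis vectors; the resulting matrices with entries in $\{0, \pm 1\}$ are reduced incidence matrices of directed graphs, hence totally unimodular, so the determinant never exceeds $1$ in absolute value.
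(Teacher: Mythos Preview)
Your proof is correct and essentially identical to the paper's: both reduce to Schinzel's determinant inequality via the identity $\Reg_S(k)\bigl[\uU_S(k):\aA\bigr] = \bigl|\det\bigl(d_v\log\|\alpha_j\|_v\bigr)_{v\neq\widehat v}\bigr|$ and the identification of the Schinzel norm of each column with $[k:\Q]\,h(\alpha_j)$. The only cosmetic difference is that the paper phrases the height identification through the $\max$ form of $\delta$ (its Lemma~\ref{lemheight1}) rather than your equivalent half-$\ell^1$ form, and it simply cites Schinzel for the determinant bound rather than sketching it.
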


A special case of (\ref{intro58}) occurs when $S$ is the collection of all archimedean places of $k$.  We write 
$O_k$ for the ring of algebraic integers in $k$, and $O_k^{\times}$ for the multiplicative group of units in $O_k$.  If $k$ 
is not $\Q$, and $k$ is not an imaginary quadratic extension of $\Q$, then the quotient group
\begin{equation*}\label{intro60}
\uU(k) = O_k^{\times}/\Tor\bigl(O_k^{\times}\bigr) \subseteq \G_k
\end{equation*}
is a free abelian group of positive rank $r$, where $r + 1$ is the number of archimedean places of $k$.  It is known from
work of Remak \cite{remak1932}, \cite{remak1952}, and Zimmert \cite{zimmert1981}, that the regulator $\Reg(k)$ is bounded from below 
by an absolute constant.  Further, Friedman \cite{friedman1989} has shown that $\Reg(k)$ takes its minimum value at the unique 
number field $k_0$ having degree $6$ over $\Q$, and having discriminant equal to $-10051$.  Thus by Friedman's result we have
\begin{equation}\label{intro61}
0.2052 \dots = \Reg(k_0) \le \Reg(k)
\end{equation}
for all algebraic number fields $k$.  Combining the inequalities (\ref{intro58}) and (\ref{intro61}) leads to the following 
explicit lower bound.

\begin{corollary}\label{corintro1}  Assume that $k$ is not $\Q$, and $k$ is not an imaginary quadratic extension of $\Q$, so that $\uU(k)$ 
has positive rank $r$.  Let $\alpha_1, \alpha_2, \dots , \alpha_r$ be multiplicatively independent elements in $\uU(k)$.   If 
$\aA \subseteq \uU(k)$ is the subgroup generated by $\alpha_1, \alpha_2, \dots , \alpha_r$, then
\begin{equation}\label{intro62}
(0.2052 \cdots )\bigl[\uU(k) : \aA \bigr]   \le \prod_{j = 1}^r \bigl([k : \Q] h(\alpha_j)\bigr).
\end{equation}
\end{corollary}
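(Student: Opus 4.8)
The plan is to obtain Corollary~\ref{corintro1} as an immediate specialization of Theorem~\ref{thmintro1}, combined with the unconditional lower bound (\ref{intro61}) for the classical regulator. First I would choose the finite set $S$ to consist of exactly the archimedean places of $k$. With this choice the ring of $S$-integers $O_S$ is the full ring of algebraic integers $O_k$, the group of $S$-units $O_S^{\times}$ is the unit group $O_k^{\times}$, and therefore
\begin{equation*}
\uU_S(k) = \uU(k) \quad\text{and}\quad \Reg_S(k) = \Reg(k).
\end{equation*}
Because $k$ is neither $\Q$ nor an imaginary quadratic field, the rank $r$ of $\uU(k)$ is positive, so the hypotheses of Theorem~\ref{thmintro1} are satisfied for the given multiplicatively independent elements $\alpha_1, \dots, \alpha_r$ and the subgroup $\aA$ they generate.

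Next I would apply Theorem~\ref{thmintro1} verbatim with this $S$, which gives
\begin{equation*}
\Reg(k)\,\bigl[\uU(k) : \aA\bigr] \le \prod_{j=1}^r \bigl([k:\Q]h(\alpha_j)\bigr).
\end{equation*}
Since $\alpha_1, \dots, \alpha_r$ are $r$ multiplicatively independent elements of the free abelian group $\uU(k)$ of rank $r$, the subgroup $\aA$ has full rank, and hence the index $\bigl[\uU(k):\aA\bigr]$ is a finite positive integer, in particular at least $1$.

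Finally I would insert the Remak--Zimmert--Friedman bound $0.2052\cdots = \Reg(k_0) \le \Reg(k)$ recorded in (\ref{intro61}). As both $\Reg(k)$ and the index are positive, replacing $\Reg(k)$ by the smaller quantity $0.2052\cdots$ on the left-hand side only weakens the inequality, which yields exactly (\ref{intro62}). I do not anticipate any genuine obstacle here, since the corollary is a direct consequence of the theorem; the only points that require care are the verification that the archimedean choice of $S$ reduces $\Reg_S(k)$ and $\uU_S(k)$ to their classical counterparts, and the observation that $\bigl[\uU(k):\aA\bigr]$ is a positive integer, so that substituting the numerical lower bound for the regulator is legitimate.
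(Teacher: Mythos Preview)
Your proposal is correct and follows exactly the approach indicated in the paper: specialize Theorem~\ref{thmintro1} to the case where $S$ is the set of archimedean places, and then replace $\Reg(k)$ by the numerical lower bound (\ref{intro61}). The paper does not give any more detail than this, so there is nothing to add.
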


Let $k$ be an algebraic number field such that the unit group $O_k^{\times}$ has positive rank $r$.  The inequality 
(\ref{intro62}) implies that each collection $\alpha_1, \alpha_2, \dots , \alpha_r$ of multiplicatively independent units must 
contain a unit, say $\alpha_1$, that satisfies
\begin{equation}\label{intro64}
(0.2052 \cdots ) \le [k : \Q] h(\alpha_1).
\end{equation}
A result of this sort was proposed by Bertrand \cite[comment (iii), p. 210]{bertrand1997}, who observed that it would follow
from an unproved hypothesis related to Lehmer's problem.

In a well known paper Lehmer \cite{lehmer1933} posed the problem, reformulated in the language 
and notation developed here, of deciding if there exists a positive constant $c_0$ such that the inequality
\begin{equation}\label{intro73}
c_0 \le [k : \Q] h(\gamma)
\end{equation}
holds for all elements $\gamma$ in $k^{\times}$, which are not in $\Tor\bigl(k^{\times}\bigr)$.  If $\gamma \not= 0$ is not a unit,
then it is easy to show that
\begin{equation*}\label{intro74}
\log 2 \le [k : \Q] h(\gamma).
\end{equation*}
Hence the proposed lower bound (\ref{intro73}) is of interest for non-torsion elements $\gamma$ in the unit group $O_k^{\times}$,
or equivalently, for a nontrivial coset representative $\gamma$ in $\uU(k)$.  The inequality (\ref{intro62})
provides a solution to a form of Lehmer's problem on average.  Further information about Lehmer's problem is given in 
\cite[section 1.6.15]{bombieri2006} and in \cite{smyth2008}.

In section 3 we give an analogous upper bound for the relative regulator associated to an extension $l/k$ of algebraic
number fields.

We will show that the inequality (\ref{intro58}) is sharp up to a constant that depends only on the rank $r$ of the group 
$\uU_S(k)$, but {\it not} on the underlying field $k$.  Related results have been
proved by Brindza \cite{brindza1991}, Bugeaud and Gy\H{o}ry \cite{bugeaud1996}, Hajdu \cite{hajdu1993},
and Matveev \cite{matveev1994}, \cite{matveev2005}.  More general inequalities that apply to arbitrary
finitely generated subgroups of $\oq$ were obtained in \cite[Theorem 1 and Theorem 2]{vaaler2014}.  The inequality 
(\ref{intro88}) that we prove here is sharper but less general, as it applies only to subgroups of a group of $S$-units
having maximum rank.

\begin{theorem}\label{thmintro2}  Let the multiplicative group of $S$-units $O_S^{\times}$ have positive rank $r$, and let
$\aA \subseteq \uU_S(k)$ be a subgroup of rank $r$.  Then there exist multiplicatively
independent elements $\beta_1, \beta_2, \dots , \beta_r$ in $\aA$, such that
\begin{equation}\label{intro88}
\prod_{j = 1}^r \bigl([k : \Q] h(\beta_j)\bigr) \le \frac{2^r (r!)^3}{(2r)!} \Reg_S(k) \bigl[\uU_S(k) : \aA \bigr]. 
\end{equation}
\end{theorem}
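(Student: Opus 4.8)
The plan is to transport the entire problem to the geometry of numbers via the logarithmic embedding, and then to read off the required inequality from Minkowski's second theorem on successive minima together with an explicit volume computation for a section of the cross-polytope. Order $S = \{v_0, v_1, \dots , v_r\}$ and send each $\gamma \in \uU_S(k)$ to the vector $\bx(\gamma) = \bigl(d_v \log|\gamma|_v\bigr)_{v \in S} \in \R^{r+1}$, where $d_v = [k_v : \Q_v]$. By the product formula the image lies in the hyperplane $H = \{\bx : \sum_{v \in S} x_v = 0\}$, and the image $\Lambda = \bx\bigl(\uU_S(k)\bigr)$ is a lattice of full rank $r$ in $H$; its kernel is exactly $\Tor\bigl(O_S^{\times}\bigr)$, so linear independence of images corresponds to multiplicative independence of $S$-units. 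Two identifications are central. First, splitting $\sum_{v} d_v \log|\gamma|_v = 0$ into positive and negative parts gives $[k : \Q] h(\gamma) = \sum_{v} d_v \log^+|\gamma|_v = \tfrac12 \|\bx(\gamma)\|_1$, so the $\ell^1$-norm on $H$ records twice the degree-weighted height. Second, since the $S$-regulator is the common absolute value of the $r \times r$ minors of the matrix $\bigl(d_v \log|\epsilon_i|_v\bigr)$ formed from a basis $\epsilon_1, \dots , \epsilon_r$ of $\uU_S(k)$, the Cauchy--Binet formula shows that the Euclidean covolume of $\Lambda$ induced from $\R^{r+1}$ equals $\sqrt{r+1}\,\Reg_S(k)$; replacing $\Lambda$ by the sublattice $\Lambda_{\aA} = \bx(\aA)$ multiplies this by $\bigl[\uU_S(k) : \aA\bigr]$.

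Next I would apply Minkowski's second theorem to the symmetric convex body $K = \{\bx \in H : \|\bx\|_1 \le 2\}$ and the lattice $\Lambda_{\aA}$, both measured with the Lebesgue measure that $H$ inherits from $\R^{r+1}$. The gauge of $K$ at $\bx(\gamma)$ is precisely $[k:\Q]h(\gamma)$, so if $\lambda_1 \le \dots \le \lambda_r$ are the successive minima of $K$ with respect to $\Lambda_{\aA}$, one may choose linearly independent lattice points $\bx(\beta_1), \dots , \bx(\beta_r)$ with $[k:\Q]h(\beta_j) = \lambda_j$; the elements $\beta_1, \dots , \beta_r \in \aA$ are then multiplicatively independent. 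The upper half of Minkowski's theorem yields
\begin{equation*}
\prod_{j=1}^r \bigl([k:\Q]h(\beta_j)\bigr) = \prod_{j=1}^r \lambda_j \le \frac{2^r \Vol_r(\Lambda_{\aA})}{\Vol_r(K)} = \frac{2^r \sqrt{r+1}\,\Reg_S(k)\bigl[\uU_S(k):\aA\bigr]}{\Vol_r(K)}.
\end{equation*}

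It remains to evaluate $\Vol_r(K)$, and this is where the Meyer--Pajor volume formula enters and where the real work lies. By homogeneity $\Vol_r(K) = 2^r \Vol_r(B \cap H)$, where $B$ is the unit cross-polytope $\{\|\bx\|_1 \le 1\}$ in $\R^{r+1}$ and $B \cap H$ is its central section orthogonal to the diagonal $(1, \dots , 1)$. The Meyer--Pajor evaluation of this section gives $\Vol_r(B \cap H) = \sqrt{r+1}\,(2r)! \big/ \bigl(2^r (r!)^3\bigr)$, which one can verify directly for small $r$ (a segment of length $\sqrt 2$ when $r = 1$, a regular hexagon of area $3\sqrt3/4$ when $r = 2$). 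Substituting $\Vol_r(K) = 2^r \Vol_r(B \cap H)$ into the displayed bound, the factors $2^r$ and $\sqrt{r+1}$ cancel and leave exactly $\frac{2^r (r!)^3}{(2r)!}\,\Reg_S(k)\bigl[\uU_S(k):\aA\bigr]$, as claimed. The main obstacle is thus not the geometry-of-numbers packaging, which is routine, but the precise section-volume identity: one must compute the $r$-dimensional volume of the hyperplane section of the cross-polytope perpendicular to its main diagonal, with the normalization matched to that used for the covolume of $\Lambda$. I expect this computation --- most cleanly carried out through the Meyer--Pajor formula or an equivalent Laplace-transform evaluation of the section volume --- to be the crux of the argument.
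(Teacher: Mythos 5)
Your proposal is correct and is essentially the paper's own proof: both arguments apply Minkowski's second theorem to the lattice of logarithmic vectors of $\aA$ and a symmetric convex body whose gauge equals $[k:\Q]\,h$, and both obtain the volume constant from the Meyer--Pajor formula for the central section of the $\ell^1$-ball by the sum-zero hyperplane. The only difference is presentational: you work intrinsically in that hyperplane (covolume $\sqrt{r+1}\,\Reg_S(k)\bigl[\uU_S(k):\aA\bigr]$, with the $\sqrt{r+1}$ cancelling against the section volume), whereas the paper deletes one coordinate and uses Schinzel's norm $\delta$ on $\R^r$ --- whose unit-ball volume it computes by passing to exactly the same hyperplane section, so the two computations coincide.
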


We note that if $r = 2$ then (\ref{intro58}) and (\ref{intro88}) imply that the multiplicatively independent elements $\beta_1$ and $\beta_2$
contained in the subgroup $\aA \subseteq \uU_S(k)$ satisfy the inequality
\begin{equation*}\label{intro91}
 \Reg_S(k) \bigl[\uU_S(k) : \aA \bigr] \le \bigl([k : \Q] h(\beta_1)\bigr)\bigl([k : \Q] h(\beta_2)\bigr) 
                                                                \le \tfrac43  \Reg_S(k) \bigl[\uU_S(k) : \aA \bigr].
\end{equation*}
It follows that $\beta_1$ and $\beta_2$ form a basis for the group $\aA$.  More generally, by using a well known lemma proved
by Mahler \cite{mahler1938} and Weyl \cite{weyl1942} (see also \cite[Chapter V, Lemma 8]{cassels1971}), we obtain the following
bound on the product of the heights of a basis for the subgroup $\aA \subseteq \uU_S(k)$.

\begin{corollary}\label{corintro2}  Let the multiplicative group of $S$-units $O_S^{\times}$ have positive rank $r$, and let
$\aA \subseteq \uU_S(k)$ be a subgroup of rank $r$.  Then there exists a basis $\gamma_1, \gamma_2, \dots , \gamma_r$ for 
the free group $\aA$, such that
\begin{equation}\label{intro93}
\prod_{j = 1}^r \bigl([k : \Q] h(\gamma_j)\bigr) \le \frac{2 (r!)^4}{(2r)!} \Reg_S(k) \bigl[\uU_S(k) : \aA \bigr]. 
\end{equation}
\end{corollary}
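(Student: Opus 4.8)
The plan is to combine Theorem~\ref{thmintro2} with a reduction-theory argument in the logarithmic embedding, using the Mahler--Weyl lemma to pass from a maximal set of multiplicatively independent elements to an honest basis. First I would recast the statement in the geometry of numbers. Under the logarithmic $S$-embedding $\alpha \mapsto \bigl(\log|\alpha|_v\bigr)_{v \in S}$ the group $\uU_S(k)$ becomes a full lattice in the trace-zero hyperplane of $\R^S$, the subgroup $\aA$ becomes a sublattice, and the quantity $[k:\Q]h(\alpha)$ becomes a norm $\|\cdot\|$ on that hyperplane (a positive multiple of a weighted $\ell^1$ norm). Linearly independent $S$-units correspond to linearly independent lattice points and bases to bases, so \eqref{intro93} is equivalent to producing a lattice basis $\gamma_1, \dots, \gamma_r$ of $\aA$ whose norms have a suitably small product.

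Next I would apply Theorem~\ref{thmintro2} to the rank-$r$ subgroup $\aA$, obtaining multiplicatively independent $\beta_1, \dots, \beta_r \in \aA$ with
\begin{equation*}
\prod_{j=1}^r \|\beta_j\| = \prod_{j=1}^r \bigl([k:\Q] h(\beta_j)\bigr) \le \frac{2^r (r!)^3}{(2r)!}\, \Reg_S(k) \bigl[\uU_S(k) : \aA \bigr].
\end{equation*}
These are linearly independent lattice points, but they generate only a finite-index subgroup of $\aA$ and in general do not form a basis. After reordering so that $\|\beta_1\| \le \cdots \le \|\beta_r\|$, the points $\beta_1, \dots, \beta_j$ are $j$ independent vectors of norm at most $\|\beta_j\|$, so the $j$-th successive minimum $\lambda_j$ of $\aA$ with respect to $\|\cdot\|$ satisfies $\lambda_j \le \|\beta_j\|$, and hence $\prod_{j} \lambda_j \le \prod_{j} \|\beta_j\|$.

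I would then invoke the Mahler--Weyl lemma \cite[Chapter V, Lemma 8]{cassels1971}, which applied to $\aA$ and the successive-minima vectors produces a basis $\gamma_1, \dots, \gamma_r$ of $\aA$ together with a lower-triangular integer matrix $(v_{ij})$ satisfying $v_{ii} \ge 1$ and $|v_{ij}| \le \tfrac12 v_{ii}$ for $j < i$. The payoff of this reduction is that the basis can be chosen with $\|\gamma_i\| \le \max\bigl(1, \tfrac i2\bigr) \lambda_i$ for each $i$; since $\prod_{i=1}^r \max\bigl(1, \tfrac i2\bigr) = \tfrac{r!}{2^{r-1}}$, this gives $\prod_i \|\gamma_i\| \le \tfrac{r!}{2^{r-1}} \prod_i \lambda_i$. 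Chaining the three bounds yields
\begin{equation*}
\prod_{j=1}^r \bigl([k:\Q] h(\gamma_j)\bigr) \le \frac{r!}{2^{r-1}} \cdot \frac{2^r (r!)^3}{(2r)!}\, \Reg_S(k) \bigl[\uU_S(k) : \aA \bigr] = \frac{2 (r!)^4}{(2r)!}\, \Reg_S(k) \bigl[\uU_S(k) : \aA \bigr],
\end{equation*}
which is exactly \eqref{intro93}.

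The step I expect to be the main obstacle is extracting the sharp constant from the Mahler--Weyl reduction. Inverting the triangular relations as $\gamma_i = v_{ii}^{-1}\bigl(\beta_i - \sum_{j<i} v_{ij}\gamma_j\bigr)$ and estimating term by term with the triangle inequality only gives a bound growing like $(3/2)^{r(r-1)/2}$, which is far too weak. Obtaining the per-index estimate $\|\gamma_i\| \le \max\bigl(1, \tfrac i2\bigr)\lambda_i$, and hence the factor $r!/2^{r-1}$, requires using the reduction geometrically: one must exploit that the diagonal entries $v_{ii}$ have product equal to the index $\bigl[\aA : \langle \beta_1, \dots, \beta_r\rangle\bigr]$, so that large diagonal entries (which are what force nonzero off-diagonal corrections) simultaneously shrink the leading term $v_{ii}^{-1}\|\beta_i\|$, together with the monotonicity $\lambda_1 \le \cdots \le \lambda_r$. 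This interplay, rather than any coordinatewise bound, is what makes the constant come out to precisely $r!/2^{r-1}$.
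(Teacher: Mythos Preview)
Your approach is correct and matches the paper's: invoke Theorem~\ref{thmintro2} and then the Mahler--Weyl lemma \cite[Chapter~V, Lemma~8]{cassels1971} to upgrade the independent set to a basis at the cost of the factor $\prod_{i=1}^r \max(1,i/2)=r!/2^{r-1}$. The worry in your final paragraph is unnecessary, since the per-index bound $\|\gamma_i\|\le\max(1,i/2)\,\lambda_i$ is the \emph{statement} of the cited lemma, not something you must re-derive.
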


\section{Preliminary results}

At each place $v$ of $k$ we write $k_v$ for the completion of $k$ at $v$, so that $k_v$ is a local field.  We
select two absolute values $\|\ \|_v$ and $|\ |_v$ from the place $v$.  The absolute value $\|\ \|_v$ extends the usual archimedean or 
non-archimedean absolute value on the subfield $\Q$.  Then $|\ |_v$ must be a power of $\|\ \|_v$, and we set
\begin{equation}\label{int1}
|\ |_v = \|\ \|_v^{d_v/d},
\end{equation}
where $d_v = [k_v : \Q_v]$ is the local degree of the extension, and $d = [k : \Q]$ is the global degree.  With these normalizations
the height of an algebraic number $\alpha \not= 0$ that belongs to $k$ is given by
\begin{equation}\label{int3}
h(\alpha) = \sum_v \log^+ |\alpha|_v = \hh \sum_v \bigl|\log |\alpha|_v\bigr|,
\end{equation}
where where $\log^+ x= \max(0,\log x)$  for $x>0$.
Each sum in (\ref{int3}) is over the set of all places $v$ of $k$, and the equality between the two sums follows 
from the product formula.  Then $h(\alpha)$ depends on the algebraic number $\alpha \not= 0$, but it does not depend on 
the number field $k$ that contains $\alpha$.  We have already noted that the height is well defined as a map
\begin{equation*}\label{int4}
h : \G_k \rightarrow [0, \infty).
\end{equation*}
Elementary properties of the height show that the map $(\alpha, \beta) \mapsto h\bigl(\alpha \beta^{-1}\bigr)$ defines
a metric on the group $\G_k$.

Let $\eta_1, \eta_2, \dots , \eta_r$ be multiplicatively independent elements in $\uU_S(k)$ that form a basis for $\uU_S(k)$ as
a free abelian group of rank $r$.  Then let
\begin{equation*}\label{int15}
M = \bigl(d_v \log \|\eta_j\|_v\bigr)
\end{equation*}
denote the $(r+1)\times r$ real matrix, where $v \in S$ indexes rows and $j = 1, 2, \dots , r$ indexes columns.  At
each place $\widehat{v}$ in $S$ we write 
\begin{equation}\label{int17}
M^{(\widehat{v})} =  \bigl(d_v \log \|\eta_j\|_v\bigr)
\end{equation}
for the $r\times r$ submatrix of $M$ obtained by removing the row indexed by the place $\widehat{v}$.
Then the $S$-regulator of $O_S^{\times}$ (or of $\uU_S(k)$) is the positive number
\begin{equation}\label{int21}
\Reg_S(k) = \bigl|\det M^{(\widehat{v})}\bigr|,
\end{equation}
which is independent of the choice of $\widehat{v}$ in $S$.  Using an inequality
proved by A.~Schinzel \cite{schinzel1978} that bounds the determinant of a real matrix, we will prove that
\begin{equation}\label{int33}
\Reg_S(k) \le \prod_{j=1}^r \bigl([k : \Q] h(\eta_j)\bigr).
\end{equation}
If the better known inequality of Hadamard is used to estimate the determinant that defines the $S$-regulator on the 
right of (\ref{int21}), we obtain an upper bound that is larger than (\ref{int33}) by a factor of $2^r$.  

Assume more generally that $\alpha_1, \alpha_2, \dots , \alpha_r$ are multiplicatively independent elements in $\uU_S(k)$, but they do
not necessarily form a basis for the free group $\uU_S(k)$.  It follows that there exists an $r \times r$, nonsingular matrix 
$B = \bigl(b_{ij}\bigr)$ with entires in $\Z$, such that
\begin{equation}\label{int37}
\log \|\alpha_j\|_v = \sum_{i = 1}^r b_{ij} \log \|\eta_i\|_v
\end{equation} 
for each place $v$ in $S$ and for each $j = 1, 2, \dots , r$.  Alternatively, (\ref{int37}) can be written as the matrix identity
\begin{equation}\label{int41}
\bigl(d_v \log \|\alpha_j\|_v\bigr) = \bigl(d_v \log \|\eta_j\|_v\bigr) B.
\end{equation}
If
\begin{equation}\label{int47}
\aA = \langle \alpha_1, \alpha_2, \dots , \alpha_r\rangle \subseteq \uU_S(k)
\end{equation}
is the multiplicative subgroup generated by $\alpha_1, \alpha_2, \dots , \alpha_r$, we find that the index of this group is given by
\begin{equation}\label{int51}
\bigl[\uU_S(k) : \aA \bigr] = |\det B|.
\end{equation}
This will lead to the more general inequality (\ref{intro58}). 

\section{Relative regulators}

Throughout this section we suppose that  $k$ and $l$ are algebraic number fields with $k \subseteq l$.  We write $r(k)$ for the rank of the unit
group $O_k^{\times}$, and $r(l)$ for the rank of the unit group $O_l^{\times}$.  Then $k$ has $r(k) + 1$ 
archimedean places, and $l$ has $r(l) + 1$ archimedean places.  In general we have $r(k) \le r(l)$, and we recall 
(see \cite[Proposition 3.20]{narkiewicz2010}) that $r(k) = r(l)$ if and only if $l$ is a $\CM$-field, and $k$ is the maximal 
totally real subfield of $l$.  

The norm is a homomorphism of multiplicative groups
\begin{equation*}\label{unit1}
\Norm_{l/k} : l^{\times} \rightarrow k^{\times}.
\end{equation*}
If $v$ is a place of $k$, then each element $\alpha$ in $l^{\times}$ satisfies the identity
\begin{equation}\label{unit3}
[l : k] \sum_{w|v} \log |\alpha|_w = \log |\Norm_{l/k}(\alpha)|_v,
\end{equation}
where the absolute values $|\ |_v$ and $|\ |_w$ are normalized as in (\ref{int1}).
It follows from (\ref{unit3}) that the norm, restricted to the subgroup $O_l^{\times}$ of units, is a homomorphism
\begin{equation*}\label{unit5}
\Norm_{l/k} : O_l^{\times} \rightarrow O_k^{\times},
\end{equation*}
and the norm, restricted to the torsion subgroup in $O_l^{\times}$, is also a homomorphism
\begin{equation*}\label{unit7}
\Norm_{l/k} : \Tor\bigl(O_l^{\times}\bigr) \rightarrow \Tor\bigl(O_k^{\times}\bigr).
\end{equation*}
Therefore we get a well defined homomorphism, which we write as
\begin{equation*}\label{unit9}
\norm_{l/k} : O_l^{\times}/\Tor\bigl(O_l^{\times}\bigr) \rightarrow O_k^{\times}/\Tor\bigl(O_k^{\times}\bigr),
\end{equation*}
and define by
\begin{equation*}\label{unit11}
\norm_{l/k}\bigl(\alpha \Tor\bigl(O_l^{\times}\bigr)\bigr) = \Norm_{l/k}(\alpha) \Tor\bigl(O_k^{\times}\bigr).
\end{equation*}
However, to simplify notation we write
\begin{equation*}\label{unit15}
F_k = O_k^{\times}/\Tor\bigl(O_k^{\times}\bigr),\quad\text{and}\quad F_l = O_l^{\times}/\Tor\bigl(O_l^{\times}\bigr),
\end{equation*}
and we write the elements of the quotient groups $F_k$ and $F_l$ as coset representatives rather than cosets.
Obviously $F_k$ and $F_l$ are free abelian groups of rank $r(k)$ and $r(l)$, respectively. 

Following Costa and Friedman \cite{costa1991}, the subgroup of relative units in $O_l^{\times}$ is defined by
\begin{equation*}\label{unit19}
\big\{\alpha \in O_l^{\times} : \Norm_{l/k}(\alpha) \in \Tor\bigl(O_k^{\times}\bigr)\big\}.
\end{equation*}
Alternatively, we work in the free group $F_l$, where the image of the subgroup of relative units is the kernel of the homomorphism 
$\norm_{l/k}$.  That is, we define the subgroup of {\it relative units} in $F_l$ to be the subgroup
\begin{equation}\label{unit21}
E_{l/k} = \big\{\alpha \in F_l : \norm_{l/k}(\alpha) = 1\big\}.
\end{equation}
We also write
\begin{equation*}\label{unit23}
I_{l/k} = \big\{\textrm{norm}_{l/k}(\alpha) : \alpha \in F_l\big\} \subseteq F_k
\end{equation*}
for the image of the homomorphism $\norm_{l/k}$.  If $\beta$ in $F_l$ represents a coset in the subgroup $F_k$, then 
we have
\begin{equation*}\label{unit25}
\norm_{l/k}(\beta) = \beta^{[l : k]}.
\end{equation*}  
Therefore the image $I_{l/k} \subseteq F_k$ is a subgroup of rank $r(k)$, and the index satisfies
\begin{equation}\label{unit27}
[F_k : I_{l/k}] < \infty.
\end{equation}
It follows that $E_{l/k} \subseteq F_l$ is a subgroup of rank $r(l/k) = r(l) - r(k)$, and we restrict our attention here to extensions 
$l/k$ such that $r(l/k)$ is positive.    

Let $\eta_1, \eta_2, \dots , \eta_{r(l/k)}$ be a collection of multiplicatively independent relative units that form a basis for the subgroup 
$E_{l/k}$.  At each archimedean place $v$ of $k$ we select a place $\widehat{w}_v$ of $l$ such that $\widehat{w}_v | v$.  Then we 
define an $r(l/k) \times r(l/k)$ real matrix 
\begin{equation}\label{unit32}
M_{l/k} = \bigl([l_w : \Q_w] \log \|\eta_j\|_w\bigr),
\end{equation}
where $w$  is an archimedean place of $l$, but $w \not= \widehat{w}_v$ for each $v|\infty$, $w$ indexes rows, and 
$j = 1, 2, \dots , r(l/k)$ indexes columns.  We write $l_w$ for the completion of $l$ at the place $w$, $\Q_w$ for the
completion of $\Q$ at the place $w$, and we write $[l_w : \Q_w]$ for the local degree.  Of course $\Q_w$ is isomorphic
to $\R$ in the situation considered here.  As in \cite{costa1991}, we define the {\it relative regulator} of the extension $l/k$ to be 
the positive number
\begin{equation}\label{unit34}
\Reg\bigl(E_{l/k}\bigr) = \bigl|\det M_{l/k}\bigr|.
\end{equation}
It follows, as in the proof of \cite[Theorem 1]{costa1991} (see also \cite{costa1993}), that the
value of the determinant on the right of (\ref{unit34}) does not depend on the choice of places $\widehat{w}_v$ for 
each archimedean place $v$ of $k$.

\begin{theorem}\label{thmunit3}  Let $k \subseteq l$ be algebraic number fields such that the group $E_{l/k}$ of relative units
has positive rank $r(l/k) = r(l) - r(k)$.  Let $\vep_1, \vep_2, \dots , \vep_{r(l/k)}$ be a collection of multiplicatively independent relative units 
in $E_{l/k}$.  If $\ee \subseteq E_{l/k}$ is the multiplicative subgroup generated by $\vep_1, \vep_2, \dots , \vep_{r(l/k)}$, then
\begin{equation}\label{unit38}
\Reg\bigl(E_{l/k}\bigr) \bigl[E_{l/k} : \ee\bigr] \le \prod_{j = 1}^{r(l/k)} \bigl([l : \Q] h(\vep_j)\bigr).
\end{equation}
\end{theorem}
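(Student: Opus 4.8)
The plan is to imitate the argument behind (\ref{int33}) and (\ref{intro58}), replacing the single global relation coming from the product formula by the family of local relations that the condition $\norm_{l/k}(\vep)=1$ forces on a relative unit, and then to invoke Schinzel's determinant inequality \cite{schinzel1978} in the form appropriate to a matrix whose rows fall into several blocks. Write $\widetilde M=\bigl([l_w:\Q_w]\log\|\vep_j\|_w\bigr)$ for the $(r(l)+1)\times r(l/k)$ matrix indexed by all archimedean places $w$ of $l$, so that the $r(l/k)\times r(l/k)$ matrix $M^{\vep}_{l/k}$ obtained by deleting, for each archimedean place $v$ of $k$, the single row $\widehat w_v$, is the $\vep$-analogue of $M_{l/k}$ in (\ref{unit32}). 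Since each $\vep_j$ is a unit, $|\vep_j|_w=1$ at every non-archimedean $w$, so (\ref{int3}) and (\ref{int1}) give
\[
[l:\Q]\,h(\vep_j)=\tfrac12\sum_{w\mid\infty}[l_w:\Q_w]\bigl|\log\|\vep_j\|_w\bigr|,
\]
which is exactly half the $\ell^1$-norm of the $j$-th column of $\widetilde M$.

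Next I would dispose of the index. Fix a basis $\eta_1,\dots,\eta_{r(l/k)}$ of the free group $E_{l/k}$. Exactly as in (\ref{int37})--(\ref{int51}) there is a nonsingular integer matrix $B=(b_{ij})$ with $\log\|\vep_j\|_w=\sum_i b_{ij}\log\|\eta_i\|_w$ for every archimedean $w$, so that $M^{\vep}_{l/k}=M_{l/k}B$ and $[E_{l/k}:\ee]=|\det B|$. Using the choice-independence of $|\det M_{l/k}|$ recorded after (\ref{unit34}), this yields
\[
\bigl|\det M^{\vep}_{l/k}\bigr|=\Reg\bigl(E_{l/k}\bigr)\,[E_{l/k}:\ee],
\]
and reduces (\ref{unit38}) to the single estimate $\bigl|\det M^{\vep}_{l/k}\bigr|\le\prod_j[l:\Q]h(\vep_j)$.

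The structural fact that makes this estimate work is a block-wise vanishing of the columns of $\widetilde M$. Applying (\ref{unit3}) to $\alpha=\vep_j$ at an archimedean place $v$ of $k$, and using that $\Norm_{l/k}(\vep_j)\in\Tor(O_k^{\times})$ has all its absolute values equal to $1$, I obtain
\[
\sum_{w\mid v}[l_w:\Q_w]\log\|\vep_j\|_w=0\qquad\text{for every archimedean $v$ of $k$.}
\]
Thus, if the rows of $\widetilde M$ are partitioned into the $r(k)+1$ blocks $\{w:w\mid v\}$ indexed by the archimedean places $v$ of $k$, then every column sums to zero within every block, and $M^{\vep}_{l/k}$ is obtained by deleting precisely one row from each block. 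This is exactly the hypothesis of Schinzel's inequality in its block form, which bounds the resulting minor by the product over the columns of half their $\ell^1$-norms, i.e. by $\prod_j[l:\Q]h(\vep_j)$. Combining this with the previous paragraph gives (\ref{unit38}).

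The main obstacle is precisely this passage from the one-block inequality used for (\ref{int33}) to the several-block version needed here. In the absolute case the $S$-unit condition and the product formula produce a single vanishing sum per column, so Schinzel's inequality is applied to an $(r+1)\times r$ matrix from which one row is removed; in the relative case the relations are local to each archimedean place of $k$, the matrix $\widetilde M$ carries $r(k)+1$ independent block relations, and one row is removed from each block. I expect the verification of the block hypothesis and of $M^{\vep}_{l/k}=M_{l/k}B$ to be routine; the real content is that Schinzel's determinant inequality remains valid with this block structure, still delivering the sharp constant $1$ rather than the factor $2^{\,r(l/k)}$ that Hadamard's inequality would give.
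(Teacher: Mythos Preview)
Your reduction to the estimate $\bigl|\det M^{\vep}_{l/k}\bigr|\le\prod_j[l:\Q]h(\vep_j)$ via the index computation is exactly what the paper does, and your identification of $[l:\Q]h(\vep_j)$ with half the $\ell^1$-norm of the $j$-th column of the full matrix $\widetilde M$ is correct. The difference lies in how the determinant bound is obtained.

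You frame the crux as a \emph{block} version of Schinzel's inequality, using the $r(k)+1$ local relations $\sum_{w\mid v}[l_w:\Q_w]\log\|\vep_j\|_w=0$, and you flag the passage from the one-block inequality to the multi-block one as the main obstacle. The paper, however, does not use any block structure and does not need the local relative-unit relations at all. It applies the \emph{ordinary} Schinzel inequality (Theorem~\ref{thmnorm1}) directly to the $r(l/k)\times r(l/k)$ matrix $M^{\vep}_{l/k}$, giving for each column the bound
\[
\delta\bigl((\log|\vep_j|_w)_{w\ne\widehat w_v}\bigr)=\tfrac12\Bigl|\sum_{w\ne\widehat w_v}\log|\vep_j|_w\Bigr|+\tfrac12\sum_{w\ne\widehat w_v}\bigl|\log|\vep_j|_w\bigr|
\]
in the form (\ref{norm5}). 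Then the single \emph{global} product formula $\sum_{w\mid\infty}\log|\vep_j|_w=0$ (valid for any unit of $l$, relative or not) rewrites the first term as $\tfrac12\bigl|\sum_{v\mid\infty}\log|\vep_j|_{\widehat w_v}\bigr|$, and one application of the triangle inequality turns this into $\tfrac12\sum_{v\mid\infty}\bigl|\log|\vep_j|_{\widehat w_v}\bigr|$. Adding the two halves recovers $\tfrac12\sum_{w\mid\infty}\bigl|\log|\vep_j|_w\bigr|=h(\vep_j)$.

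So the ``block Schinzel'' you invoke is not needed as a separate result, and indeed if you tried to prove it you would arrive at exactly this computation: the block zero-sums only enter through their total, and after that it is Schinzel plus the triangle inequality. Your route is not wrong, but it obscures the fact that nothing beyond Theorem~\ref{thmnorm1} and the ordinary product formula is required; the relative-unit hypothesis is used only to make $M_{l/k}$ square and its determinant well defined, not in the analytic estimate itself.
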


The relative regulator can also be expressed as a ratio of the (ordinary) regulators $\Reg(k)$ and $\Reg(l)$ by using the basic identity
\begin{equation}\label{unit45}
[F_k : I_{l/k}] \Reg(k) \Reg\bigl(E_{l/k}\bigr) = \Reg(l),
\end{equation}
which was established in \cite[Theorem 1]{costa1991}.  A slightly different definition for a relative regulator was considered
by Berg\'e and Martinet in \cite{berge1987}, \cite{berge1988}, and \cite{berge1989}.  We have used the definition proposed
by Costa and Friedman in \cite{costa1991} and \cite{costa1993}, as it leads more naturally to the inequality (\ref{unit38}).  Further
lower bounds for the product on the right of (\ref{unit38}) follow from inequalities for the relative regulator 
obtained by Friedman and Skoruppa \cite{friedman1999}.

\section{Schinzel's norm}

For a real number $x$ we write
\begin{equation*}\label{norm0}
x^+ = \max\{0, x\},\quad\text{and}\quad x^- = \max\{0, -x\},
\end{equation*}
so that $x = x^+ - x^-$ and $|x| = x^+ + x^-$.  If $\bx = (x_n)$ is a (column) vector in $\R^N$ we define
$$\delta : \R^N \rightarrow [0, \infty)$$ by
\begin{equation}\label{norm2}
\delta(\bx) = \max\bigg\{\sum_{m=1}^N x_m^+, ~\sum_{n=1}^N x_n^-\bigg\}.
\end{equation}
The following inequality was proved by A.~Schinzel \cite{schinzel1978}.

\begin{theorem}\label{thmnorm1}  If $\bx_1, \bx_2, \dots , \bx_N$, are column vectors in $\R^N$, then
\begin{equation}\label{norm3}
\bigl|\det\bigl(\bx_1\ \bx_2\ \cdots\ \bx_N\bigr)\bigr| \le \delta\bigl(\bx_1\bigr) \delta\bigl(\bx_2\bigr) \cdots \delta\bigl(\bx_N\bigr).
\end{equation}
\end{theorem}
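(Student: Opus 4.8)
The plan is to deduce Schinzel's inequality (\ref{norm3}) from two facts: a homogeneity/convexity reduction that moves the problem to the vertices of the unit ball of $\delta$, and the combinatorial structure of those vertices. First I would record the elementary properties of $\delta$. The maps $\bx \mapsto \sum_m x_m^+$ and $\bx \mapsto \sum_n x_n^-$ are convex and positively homogeneous, so their maximum $\delta$ is convex and satisfies $\delta(t\bx) = |t|\,\delta(\bx)$ for every $t \in \R$; moreover $\delta(\bx) \ge \hh\sum_n |x_n|$, so $\delta(\bx) = 0$ forces $\bx = \bo$. Thus $\delta$ is a norm, and since it is piecewise linear its unit ball
\begin{equation*}
B = \{\bx \in \R^N : \delta(\bx) \le 1\}
\end{equation*}
is a compact, convex, symmetric polytope. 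If some $\delta(\bx_j) = 0$ then that column vanishes and both sides of (\ref{norm3}) are $0$; otherwise I rescale $\bx_j$ to $\bx_j/\delta(\bx_j) \in B$, and multilinearity of the determinant reduces the claim to the statement that $\bigl|\det\bigl(\bx_1\ \cdots\ \bx_N\bigr)\bigr| \le 1$ whenever every $\bx_j$ lies in $B$.

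Since the determinant is linear in each column, for fixed values of the remaining columns the map $\bx_j \mapsto \det\bigl(\bx_1\ \cdots\ \bx_N\bigr)$ is a linear functional on the compact convex set $B$ and so attains its extreme values at extreme points of $B$. Applying this to each column in turn shows that the maximum of $\bigl|\det\bigl(\bx_1\ \cdots\ \bx_N\bigr)\bigr|$ over $B^N$ is attained when every column is a vertex of $B$, so it suffices to bound the determinant when each $\bx_j$ is such a vertex. I would then prove that the vertices of $B$ are exactly the vectors $\be_i - \be_j$ with $0 \le i, j \le N$ and $i \ne j$, where $\be_0 := \bo$ (so the list consists of the $\pm\be_i$ together with the differences $\be_i - \be_j$ for $1 \le i,j \le N$). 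If $\bx \in B$ has two strictly positive coordinates $x_a, x_b > 0$, then for small $\vep > 0$ both $\bx \pm \vep(\be_a - \be_b)$ lie in $B$, since the positive- and negative-part sums are unchanged, so $\bx$ is not extreme; hence a vertex has at most one positive and, symmetrically, at most one negative coordinate, i.e. $\bx = c\,\be_i - d\,\be_j$ with $c,d \ge 0$ and $i \ne j$. If $0 < c < 1$ then $\bx \pm \vep\be_i \in B$ exhibits $\bx$ as a nontrivial midpoint, forcing $c \in \{0,1\}$, and likewise $d \in \{0,1\}$ (not both $0$, as $\bo = \hh(\be_1 + (-\be_1))$ is not extreme). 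This yields precisely the asserted list, and each such vector is readily checked to be a genuine vertex.

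Finally I would bound $\bigl|\det\bigl(\bx_1\ \cdots\ \bx_N\bigr)\bigr|$ when all $N$ columns have the form $\be_i - \be_j$. Treating the index $0$ as an extra coordinate, such a matrix is obtained from the node--edge incidence matrix of $N$ edges of the complete graph on the vertex set $\{0, 1, \dots, N\}$ by deleting the row indexed by $0$. By the standard total unimodularity of incidence matrices, every such determinant lies in $\{-1, 0, 1\}$; concretely, the columns are independent only when the corresponding edges form a spanning tree, in which case a leaf distinct from $0$ contributes a row with a single $\pm 1$ entry, and expanding along it gives $\det = \pm 1$ by induction on $N$, while otherwise the determinant is $0$. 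Hence $\bigl|\det\bigl(\bx_1\ \cdots\ \bx_N\bigr)\bigr| \le 1$ at the vertices, completing the proof. I expect the vertex characterization to be the step demanding the most care: the scaling reduction and the concluding unimodularity argument are routine, whereas correctly identifying the extreme points of $B$, and verifying that none are overlooked, carries the real combinatorial content.
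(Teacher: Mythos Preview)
Your proof is correct and self-contained. Note, however, that the paper does \emph{not} supply its own proof of this theorem: it merely quotes the result and cites Schinzel's original paper \cite{schinzel1978}. There is therefore nothing in the present paper to compare your argument against.

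For what it is worth, your argument follows the natural strategy. The reduction by homogeneity and multilinearity to columns in the unit ball $B$, and then to extreme points of $B$, is sound because $\bx_j \mapsto \bigl|\det(\bx_1\ \cdots\ \bx_N)\bigr|$ is convex in each column separately. Your identification of the vertices of $B$ as the vectors $\be_i - \be_j$ with $0 \le i,j \le N$, $i \ne j$, $\be_0 = \bo$, is correct; the perturbation arguments you sketch (moving mass between two coordinates of the same sign, or perturbing a coordinate whose magnitude lies strictly between $0$ and $1$) do eliminate all non-vertex points, and it is straightforward to check that each listed point is genuinely extreme. The concluding step---that any $N\times N$ matrix whose columns are such vectors has determinant in $\{-1,0,1\}$---is exactly the total unimodularity of the vertex--edge incidence matrix of a graph on $\{0,1,\dots,N\}$ with the row for vertex $0$ removed, and your inductive leaf-deletion argument proves it cleanly. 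One small remark: in the step ``$\bo = \hh(\be_1 + (-\be_1))$ is not extreme'' you might equally well just observe $\delta(\bo)=0<1$, so $\bo$ lies in the interior of $B$.
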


\noindent An upper bound that is slightly sharper than (\ref{norm3}) was established by C.~R.~Johnson and M.~Newman \cite{johnson1980}.  
However, the bound obtained by Johnson and Newman does not lead to a significant improvement in the results we obtain here.

If $a$ and $b$ are nonnegative real numbers then
\begin{equation*}\label{norm4}
2 \max\{a, b\} = |a + b| + |a - b|.
\end{equation*}
This leads to the identity
\begin{equation}\label{norm5}
\delta(\bx) = \max\bigg\{\sum_{m=1}^N x_m^+, ~\sum_{n=1}^N x_n^-\bigg\}
	          = \hh \biggl|~\sum_{n=1}^N x_n\biggr| + \hh \sum_{n=1}^N |x_n|.
\end{equation}
It follows easily from (\ref{norm5}) that $\bx \mapsto \delta(\bx)$ is a continuous, symmetric distance function, or 
norm, defined on $\R^N$.  Let
\begin{equation}\label{norm9}
K_N = \big\{\bx \in \R^N : \delta(\bx) \le 1\big\}
\end{equation}
be the unit ball associated to the norm $\delta$.  Then $K_N$ is a compact, convex, symmetric subset of $\R^N$ having a nonempty interior.

\begin{lemma}\label{lemmink1}  Let $\delta : \R^N \rightarrow [0, \infty)$ be the continuous distance function defined by
{\rm (\ref{norm5})}, and let $K_N$ be the unit ball defined by {\rm (\ref{norm9})}.  Then we have
\begin{equation}\label{norm17}
\Vol_N(K_N) = \frac{(2N)!}{(N!)^3}.
\end{equation}
\end{lemma}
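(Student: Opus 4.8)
The plan is to use the form $\delta(\bx) = \max\bigl\{\sum_{m=1}^N x_m^+,\ \sum_{n=1}^N x_n^-\bigr\}$ recorded in (\ref{norm2}), so that the membership condition $\bx \in K_N$ is equivalent to the \emph{pair} of inequalities $\sum_{m=1}^N x_m^+ \le 1$ and $\sum_{n=1}^N x_n^- \le 1$. I would then decompose $\R^N$ into its $2^N$ closed coordinate orthants, indexed by the subset $P \subseteq \{1, 2, \dots, N\}$ of indices on which $x_n \ge 0$. The pairwise intersections of these orthants lie in coordinate hyperplanes, hence have $N$-dimensional Lebesgue measure zero, so they contribute nothing and may be ignored when computing the volume.

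The key step is the following decoupling observation. Within the orthant determined by a fixed $P$ with $|P| = p$, the positive parts $x_n^+$ are supported on the indices $n \in P$, while the negative parts $x_n^-$ are supported on the indices $n \notin P$. Consequently the two constraints $\sum_m x_m^+ \le 1$ and $\sum_n x_n^- \le 1$ involve disjoint sets of variables and separate completely. The slice of $K_N$ in this orthant is therefore (up to the measure-preserving sign flips $x_n \mapsto -x_n$ on the coordinates $n \notin P$) a Cartesian product of a standard $p$-dimensional simplex $\{y_i \ge 0,\ \sum_{i=1}^p y_i \le 1\}$ and a standard $(N-p)$-dimensional simplex. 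Since the standard simplex in $\R^p$ has volume $1/p!$, this orthant contributes volume $\bigl(p!\,(N-p)!\bigr)^{-1}$ to $\Vol_N(K_N)$.

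Summing over all orthants, and noting that exactly $\binom{N}{p}$ subsets $P$ have $|P| = p$, I obtain
\begin{equation*}
\Vol_N(K_N) = \sum_{p=0}^N \binom{N}{p} \frac{1}{p!\,(N-p)!} = \frac{1}{N!} \sum_{p=0}^N \binom{N}{p}^2 ,
\end{equation*}
where the second equality uses $\binom{N}{p} \bigl(p!\,(N-p)!\bigr)^{-1} = \binom{N}{p}^2/N!$. The final step is to invoke the Vandermonde identity $\sum_{p=0}^N \binom{N}{p}^2 = \binom{2N}{N}$, which gives $\Vol_N(K_N) = \binom{2N}{N}/N! = (2N)!/(N!)^3$, as claimed in (\ref{norm17}).

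I expect the only genuine subtlety to be the decoupling observation in the second step: recognizing that, orthant by orthant, the single body $K_N$ factors as a product of two independent simplices. Once this is established, everything else is routine — the measure-zero overlap of the orthants, the elementary simplex volume $1/p!$, and the combinatorial identity of Vandermonde are all standard. It is worth double-checking the small cases ($N = 1$ gives $K_1 = [-1,1]$ with $\Vol_1 = 2 = 2!/(1!)^3$, and $N = 2$ gives $\Vol_2 = 3 = 4!/(2!)^3$) to confirm the bookkeeping in the orthant sum.
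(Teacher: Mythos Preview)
Your argument is correct and entirely self-contained: the orthant decomposition cleanly decouples the two constraints, each orthant contributes $1/\bigl(p!\,(N-p)!\bigr)$, and the Vandermonde identity finishes the job. The small-$N$ checks confirm the bookkeeping.

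The paper proceeds quite differently. It realizes $\delta(\bx)$ as $\|J\bx\|_1$ for a certain $(N+1)\times N$ matrix $J$, so that $K_N$ is identified with the central hyperplane section $\D_N \cap \B_{N+1}$ of the $\ell^1$-ball in $\R^{N+1}$, where $\D_N = \{y_0 + \cdots + y_N = 0\}$. The volume of this section is then read off from a result of Meyer and Pajor, and a Cauchy--Binet computation supplies the Jacobian factor $|\det U|$ needed to pass from the orthonormal parametrization of $\D_N$ back to the original coordinates. Your approach is more elementary in that it avoids the external citation and the linear-algebra bookkeeping; the paper's approach, on the other hand, explains \emph{why} the central binomial coefficient appears, by exhibiting $K_N$ as a canonical section of the cross-polytope, and connects the lemma to a broader circle of results on sections of $\ell_p^n$ balls. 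Either route is perfectly acceptable here.
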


\begin{proof}  We write $J$ for the $(N+1)\times N$ matrix 
\begin{equation*}\label{norm21}
J = \hh \begin{pmatrix} 1 & 0 & 0 & \cdots & 0 \cr
                                       0 & 1 & 0 & \cdots & 0 \cr
                                       0 & 0 & 1 & \cdots & 0 \cr
                   \vdots & \vdots & \vdots &      & \vdots \cr
                                       0 & 0 & 0 & \cdots & 1 \cr
                                      -1 & -1 & -1 & \cdots & -1\cr\end{pmatrix}.                             
\end{equation*}  
Then it is obvious that $J$ has rank $N$.  Let
\begin{equation*}\label{norm25}
\D_N = \big\{\bwy \in \R^{N+1} : y_0 + y_1+ y_2 + \cdots + y_N = 0\big\},
\end{equation*}
so that $\D_N$ is the $N$-dimensional subspace of $\R^{N+1}$ spanned by the columns of $J$.  Further, let
\begin{equation*}\label{norm29}
\B_{N+1} = \big\{\bwy \in \R^{N+1} : \|\bwy\|_1 = |y_0| + |y_1| + |y_2| + \cdots + |y_N| \le 1\big\}
\end{equation*}
denote the unit ball in $\R^{N+1}$ with respect to the $\|\ \|_1$-norm.  If $\bx$ is a (column) vector in $\R^N$, we find that
\begin{equation*}\label{norm31}
\delta(\bx) = \|J\bx\|_1,
\end{equation*}
and therefore
\begin{equation*}\label{norm35}
K_N = \big\{\bx \in \R^N : \|J\bx\|_1 \le 1\big\}.
\end{equation*}
It follows that
\begin{align}\label{norm39}
\begin{split}
\Vol_N(K_N) &= \int_{\R^N} \chi_{\B_{N+1}}(J\bx)\ \dbx\\
		&= |\det U| \int_{\R^N} \chi_{\B_{N+1}}(JU\bx)\ \dbx,
\end{split}
\end{align}
where $\bwy \mapsto \chi_{\B_{N+1}}(\bwy)$ is the characteristic function of the subset $\B_{N+1}$, and $U$ is an arbitrary
$N\times N$ nonsingular real matrix.  We select $U$ so that the columns of the matrix $JU$ form an orthonormal basis for
the subspace $\D_N$.  With this choice of $U$ we find that
\begin{equation}\label{norm43}
\int_{\R^N} \chi_{\B_{N+1}}(JU\bx)\ \dbx = \Vol_N\bigl(\D_N \cap \B_{N+1}\bigr) = \frac{\sqrt{N+1} (2N)!}{2^N (N!)^3},
\end{equation}
where the second equality on the right of (\ref{norm43}) follows from a result of Meyer and Pajor \cite[Proposition II.7]{meyer1988}.
Because the columns of $JU$ are orthonormal, we get
\begin{equation}\label{norm53}
\bone_N = (JU)^T(JU).
\end{equation}
For each $m = 1, 2, \dots , N+1$ let $J^{(m)}$ be the $N\times N$ submatrix of $J$ obtained by removing the $m$-th row.
From (\ref{norm53}) and the Cauchy-Binet formula we have
\begin{align*}\label{norm57}
\begin{split}
1 &= \det\bigl((JU)^T(JU)\bigr)\\
   &= (\det U)^2 \det J^TJ \\ 
   &= (\det U)^2 \sum_{m=1}^{N+1} (\det J^{(m)})^2\\
   &= (\det U)^2 4^{-N} (N+1),                                                                                                                                                                                                                                                                                                                                                                                                                                                                                                                                                                                                                                                                                                                                                                                                                                                                       \end{split}
\end{align*}
and therefore
\begin{equation}\label{norm59}
|\det U| = \frac{2^N}{\sqrt{N+1}}. 
\end{equation}
The identity (\ref{norm17}) for the volume of $K_N$ follows by combining (\ref{norm39}), (\ref{norm43}) and (\ref{norm59}).
\end{proof}

Next we suppose that
\begin{equation*}\label{norm63}
A = \bigl(\ba_1\ \ba_2\ \cdots\ \ba_N\bigr)
\end{equation*}
is an $N\times N$ nonsingular matrix with columns $\ba_1, \ba_2, \dots , \ba_N$.  Obviously the columns of $A$ form
a basis for the lattice
\begin{equation}\label{norm67}
\eL = \big\{A\bxi : \bxi \in \Z^N\big\} \subseteq \R^N.
\end{equation}
Then by Schinzel's inequality we have
\begin{equation*}\label{norm71}
|\det A| \le \prod_{n=1}^N \delta(\ba_n).
\end{equation*}
Using the geometry of numbers, we will establish the existence of linearly independent points  $\bell_1, \bell_2, \dots , \bell_N$ in 
the lattice $\eL$, for which the product
\begin{equation*}\label{norm91}
\prod_{n=1}^N \delta(\bell_n)
\end{equation*}
is not much larger than $|\det A|$.  An explicit bound on such a product follows immediately from Minkowski's theorem on
successive minima and our formula (\ref{norm17}) for the volume of $K_N$.

\begin{theorem}\label{thmmink1}  Let $\eL \subseteq \R^N$ be the lattice defined by {\rm (\ref{norm67})}.  Then there 
exist linearly independent points $\bell_1, \bell_2, \dots , \bell_N$ in $\eL$ such that
\begin{equation}\label{norm93}
\prod_{n=1}^N \delta(\bell_n) \le \frac{2^N (N!)^3}{(2N)!} |\det A|.
\end{equation}
\end{theorem}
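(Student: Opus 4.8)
The plan is to apply Minkowski's theorem on successive minima to the lattice $\eL$ of (\ref{norm67}) together with the convex body $K_N$, and then to substitute the volume computed in Lemma \ref{lemmink1}. First I would recall that, since $K_N$ is a compact, convex, symmetric subset of $\R^N$ with nonempty interior and $\eL$ is a lattice of full rank $N$, the successive minima
\[
\lambda_n = \inf\bigl\{\lambda > 0 : \lambda K_N \text{ contains } n \text{ linearly independent points of } \eL\bigr\}
\]
are well defined and satisfy $0 < \lambda_1 \le \lambda_2 \le \cdots \le \lambda_N < \infty$. A standard part of the theory guarantees that these minima are realized by linearly independent lattice points: there exist linearly independent $\bell_1, \bell_2, \dots , \bell_N$ in $\eL$ with $\bell_n \in \lambda_n K_N$, and hence $\delta(\bell_n) \le \lambda_n$ for each $n$. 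These are the points asserted to exist in the statement.

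The key input is then the upper estimate in Minkowski's second theorem, which compares the product of the successive minima with the lattice determinant:
\[
\lambda_1 \lambda_2 \cdots \lambda_N \, \Vol_N(K_N) \le 2^N \det(\eL).
\]
Because the columns of $A$ form a basis for $\eL$ by (\ref{norm67}), we have $\det(\eL) = |\det A|$. Combining this with the volume formula $\Vol_N(K_N) = (2N)!/(N!)^3$ supplied by Lemma \ref{lemmink1} gives
\[
\prod_{n=1}^N \delta(\bell_n) \le \prod_{n=1}^N \lambda_n \le \frac{2^N |\det A|}{\Vol_N(K_N)} = \frac{2^N (N!)^3}{(2N)!}\, |\det A|,
\]
which is exactly (\ref{norm93}).

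I expect no serious obstacle here, since the argument merely assembles two classical facts. The genuine content is imported: Lemma \ref{lemmink1} provides the nonstandard volume $(2N)!/(N!)^3$, and the harder (upper-bound) half of Minkowski's second theorem provides the comparison above. The only step that requires a word of care is the passage from the mere existence of the successive minima to the existence of linearly independent lattice points realizing them with $\delta(\bell_n) \le \lambda_n$; this is part of the standard formulation and needs only to be quoted correctly. It is worth emphasizing that we use only the upper inequality of Minkowski's theorem, so the companion lower bound $\tfrac{2^N}{N!}\det(\eL) \le \lambda_1 \cdots \lambda_N \Vol_N(K_N)$ plays no role in this direction.
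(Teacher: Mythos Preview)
Your proposal is correct and follows essentially the same route as the paper: apply Minkowski's second theorem to the lattice $\eL$ and the convex body $K_N$, then substitute the volume from Lemma~\ref{lemmink1}. The only cosmetic difference is that the paper asserts $\delta(\bell_n) = \lambda_n$ (the minima are attained, since $K_N$ is compact) whereas you write $\delta(\bell_n) \le \lambda_n$; either version suffices for the argument.
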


\begin{proof}  Let
\begin{equation*}\label{norm95}
0 < \lambda_1 \le \lambda_2 \le \cdots \le \lambda_N < \infty
\end{equation*}
be the successive minima of the lattice $\eL$ with respect to the convex symmetric set $K_N$.  Then there exist linearly
independent points $\bell_1, \bell_2, \dots , \bell_N$ in $\eL$ such that
\begin{equation*}\label{norm97}
\delta(\bell_n) = \lambda_n\quad\text{for each $n = 1, 2, \dots , N$.}
\end{equation*}
By Minkowski's theorem on successive minima (see \cite[section VIII.4.3]{cassels1971}) we have the inequality
\begin{equation*}\label{norm99}
\Vol_N(K_N) \lambda_1 \lambda_2 \cdots \lambda_N \le 2^N |\det A|.
\end{equation*}
From Lemma \ref{lemmink1} we get the bound
\begin{equation*}\label{norm101}
\prod_{n=1}^N \delta(\bell_n) \le \frac{2^N (N!)^3}{(2N)!} |\det A|,
\end{equation*}
and this proves the theorem.
\end{proof}

\section{Proof of Theorem \ref{thmintro1} and Theorem \ref{thmintro2}}

We require the following lemma, which connects the Schinzel norm (\ref{norm2}) with the Weil height.

\begin{lemma}\label{lemheight1}  Let $\widehat{v}$ be a place of the algebraic number field $k$, and let $\alpha \not= 0$ be an 
element of $k^{\times}$.  Then we have
\begin{equation}\label{height1}
\max\Big\{\sum_{v \not= \widehat{v}} \log^+ |\alpha|_v, \sum_{v \not= \widehat{v}} \log^- |\alpha|_v\Big\} = h(\alpha).
\end{equation}
\end{lemma}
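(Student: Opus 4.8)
The plan is to reduce everything to the product formula together with the two expressions for the height recorded in (\ref{int3}). First I would set $x_v = \log |\alpha|_v$ at each place $v$ of $k$, so that in the notation $x = x^+ - x^-$, $|x| = x^+ + x^-$ introduced at the start of Section~4 we have $\log^+ |\alpha|_v = x_v^+$ and $\log^- |\alpha|_v = x_v^-$. The product formula gives $\sum_v x_v = 0$, hence $\sum_v x_v^+ = \sum_v x_v^-$, and by the definition (\ref{int3}) of the Weil height each of these full sums over \emph{all} places equals $h(\alpha)$ (the first is literally $\sum_v \log^+ |\alpha|_v$, and the second then agrees with it).

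The key observation is that at the single place $\widehat{v}$ at most one of $x_{\widehat{v}}^+$ and $x_{\widehat{v}}^-$ can be nonzero, i.e. $\min\{x_{\widehat{v}}^+, x_{\widehat{v}}^-\} = 0$. Removing the $\widehat{v}$-term from each of the two full sums, I would write
\[
\sum_{v \neq \widehat{v}} x_v^+ = h(\alpha) - x_{\widehat{v}}^+ \quad\text{and}\quad \sum_{v \neq \widehat{v}} x_v^- = h(\alpha) - x_{\widehat{v}}^-.
\]
Taking the maximum and using $\min\{x_{\widehat{v}}^+, x_{\widehat{v}}^-\} = 0$ then yields
\[
\max\Big\{\sum_{v \neq \widehat{v}} x_v^+,\ \sum_{v \neq \widehat{v}} x_v^-\Big\} = h(\alpha) - \min\{x_{\widehat{v}}^+, x_{\widehat{v}}^-\} = h(\alpha),
\]
which is exactly the asserted identity (\ref{height1}).

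There is no genuine obstacle here: the entire content sits in the product formula and in the elementary fact that a real number cannot simultaneously have a nonzero positive part and a nonzero negative part. If one prefers to avoid the shorthand, I would instead split into the cases $|\alpha|_{\widehat{v}} \geq 1$ and $|\alpha|_{\widehat{v}} \leq 1$. In the first case the negative-part sum over $v \neq \widehat{v}$ already captures all of $\sum_v x_v^- = h(\alpha)$ while the positive-part sum has lost the nonnegative term $x_{\widehat{v}}^+$; in the second case the roles are reversed. In either case the maximum of the two truncated sums is attained and equals $h(\alpha)$.
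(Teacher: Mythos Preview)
Your argument is correct and is essentially the same as the paper's: both rest on the product formula giving $\sum_v \log^+|\alpha|_v = \sum_v \log^-|\alpha|_v = h(\alpha)$ together with the observation that at the single omitted place $\widehat{v}$ only one of $\log^+|\alpha|_{\widehat{v}}$, $\log^-|\alpha|_{\widehat{v}}$ can be nonzero. The paper phrases this last step as an explicit case split on the sign of $\log|\alpha|_{\widehat{v}}$, which is exactly the alternative you sketch at the end; your compact version via $\max\{h(\alpha)-x_{\widehat{v}}^+,\,h(\alpha)-x_{\widehat{v}}^-\} = h(\alpha) - \min\{x_{\widehat{v}}^+, x_{\widehat{v}}^-\}$ is just a repackaging of that same dichotomy.
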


\begin{proof}  The product formula implies that
\begin{equation*}\label{height3}
h(\alpha) = \sum_v \log^+ |\alpha|_v = \sum_v \log^- |\alpha|_v.
\end{equation*}
If $\log |\alpha|_{\widehat{v}} \le 0$ then
\begin{equation*}\label{height4}
\max\Big\{\sum_{v \not= \widehat{v}} \log^+ |\alpha|_v, \sum_{v \not= \widehat{v}} \log^- |\alpha|_v\Big\} = \sum_v \log^+ |\alpha|_v = h(\alpha).
\end{equation*}
On the other hand, if $\log |\alpha|_{\widehat{v}} \ge 0$ then
\begin{equation*}\label{height5}
\max\Big\{\sum_{v \not= \widehat{v}} \log^+ |\alpha|_v, \sum_{v \not= \widehat{v}} \log^- |\alpha|_v\Big\} = \sum_v \log^- |\alpha|_v = h(\alpha).
\end{equation*}
This proves the lemma.
\end{proof}

We now prove Theorem \ref{thmintro1}.  First we combine (\ref{int17}), (\ref{int21}), (\ref{int41}), and (\ref{int51}), and obtain the identity
\begin{equation}\label{height13}
 \Reg_S(k) \bigl[\uU_S(k) : \aA \bigr] = [k : \Q]^r  \bigl| \det(\log |\alpha_j|_v)\bigr|,
\end{equation}  
where $v$ in $S\setminus \{\widehat{v}\}$ indexes rows, and $j = 1, 2, \dots , r$ indexes columns, in the matrix on the right of (\ref{height13}).
We estimate the determinant in (\ref{height13}) by applying Schinzel's inequality (\ref{norm3}).  Using 
(\ref{norm2}) and (\ref{height1}) we get
\begin{align}\label{height17}
\begin{split}
\bigl| \det(\log |\alpha_j|_v)\bigr| &\le \prod_{j = 1}^r \max\Big\{\sum_{v \not= \widehat{v}} \log^+ |\alpha_j|_v, 
								\sum_{v \not= \widehat{v}} \log^- |\alpha_j|_v\Big\}\\
                                                          &= \prod_{j = 1}^r h(\alpha_j).
\end{split}
\end{align}
The inequality (\ref{intro58}) in the statement of Theorem \ref{thmintro1} follows from (\ref{height13}) and (\ref{height17}).

Next we prove Theorem \ref{thmintro2}.  Let $\eta_1, \eta_2, \dots , \eta_r$ be multiplicatively independent elements in $\uU_S(k)$ that 
form a basis for $\uU_S(k)$ as a free abelian group of rank $r$.  Let $\widehat{v}$ be a place of $k$ contained in $S$, and 
\begin{equation*}\label{height19}
M^{(\widehat{v})} =  \bigl(d_v \log \|\eta_j\|_v\bigr)
\end{equation*}
the $r\times r$ real matrix as defined in (\ref{int17}).  By hypothesis $\aA \subseteq \uU_S(k)$ is a subgroup of rank $r$.  Let 
$\alpha_1, \alpha_2, \dots , \alpha_r$ be multiplicatively independent elements in $\aA$ that form a basis 
for $\aA$.  As in (\ref{int37}), there exists an $r \times r$ nonsingular matrix $B = \bigl(b_{ij}\bigr)$ with entries in $\Z$, such that
\begin{equation}\label{height23}
\log \|\alpha_j\|_v = \sum_{i = 1}^r b_{ij} \log \|\eta_i\|_v
\end{equation} 
for each place $v$ in $S$ and for each $j = 1, 2, \dots , r$.  Alternatively, if we define the $r \times r$ real matrix
\begin{equation*}\label{height25}
A^{(\widehat{v})} = \bigl(d_v \log \|\alpha_j\|_v\bigr),
\end{equation*}
where $v$ in $S \setminus \{\widehat{v}\}$ indexes rows and $j = 1, 2, \dots , r$ indexes columns, then (\ref{height23}) is 
equivalent to the matrix identity
\begin{equation}\label{height27}
A^{(\widehat{v})} = M^{(\widehat{v})} B.
\end{equation}
We use the nonsingular $r \times r$ real matrix $A^{(\widehat{v})}$ to define a lattice $\eL^{(\widehat{v})} \subseteq \R^r$ by
\begin{equation*}\label{height31}
\eL^{(\widehat{v})} = \big\{A^{(\widehat{v})} \bxi : \bxi \in \Z^r\big\}.
\end{equation*}
Then (\ref{int21}), (\ref{int51}) and (\ref{height27}), imply that
\begin{equation}\label{height33}
\Reg_S(k) [\uU_S(k) : \aA] = \bigl|\det M^{(\widehat{v})}\bigr| |\det B| = \bigl|\det A^{(\widehat{v})}\bigr|,
\end{equation}
which is independent of the choice of $\widehat{v}$ in $S$, and is also the determinant of the lattice $\eL^{(\widehat{v})}$.  By 
Theorem \ref{thmmink1} and (\ref{height33}), there exist linearly independent vectors $\bell_1, \bell_2, \dots , \bell_r$ in 
$\eL^{(\widehat{v})}$ such that
\begin{equation}\label{height35}
\prod_{j=1}^r \delta(\bell_j) \le \frac{2^r (r!)^3}{(2r)!}  \Reg_S(k) [\uU_S(k) : \aA].
\end{equation}
As each (column) vector $\bell_j$ belongs to the lattice $\eL^{(\widehat{v})}$, it has rows indexed by the places $v$ in 
$S \setminus \{\widehat{v}\}$.  Thus $\bell_j$ can be written as
\begin{equation*}\label{height39}
\bell_j = \Bigl(d_v \sum_{i = 1}^r f_{ij} \log \|\alpha_i\|_v\Bigr) = \bigl(d_v \log \|\beta_j\|_v\bigr),
\end{equation*}
where $F = \bigl(f_{ij}\bigr)$ is an $r \times r$ nonsingular matrix with entires in $\Z$, and $\beta_1, \beta_2, \dots , \beta_r$ are
multiplicatively independent elements in the group $\aA$.  By Lemma \ref{lemheight1} we have
\begin{align}\label{height41}
\begin{split}
\delta(\bell_j) &=  \max\Big\{\sum_{v \not= \widehat{v}} d_v \log^+ \|\beta_j\|_v, \sum_{v \not= \widehat{v}} d_v \log^- \|\beta_j\|_v\Big\}\\
                         &= [k : \Q] \max\Big\{\sum_{v \not= \widehat{v}} \log^+ |\beta_j|_v, \sum_{v \not= \widehat{v}} \log^- |\beta_j|_v\Big\}\\
                         &= [k : \Q] h(\beta_j).
\end{split}
\end{align}
The inequality (\ref{intro88}) in the statement of Theorem \ref{thmintro2} follows from (\ref{height35}) and (\ref{height41}).

\section{Proof of Theorem \ref{thmunit3}}

Let $\eta_1, \eta_2, \dots , \eta_{r(l/k)}$ be a basis for the free abelian group $E_{l/k}$.  Then there exists a nonsingular, 
$r(l/k) \times r(l/k)$ matrix $C = \bigl(c_{ij}\bigr)$ with entries in $\Z$, such that
\begin{equation}\label{pf1}
\log \|\vep_j\|_w = \sum_{i = 1}^{r(l/k)} c_{ij} \log \|\eta_i\|_w
\end{equation}
at each archimedean place $w$ of $l$.  As in our derivation of (\ref{int41}) and (\ref{int51}), the equations (\ref{pf1}) can 
be written as the matrix equation
\begin{equation}\label{pf3}
\bigl([l_w : \Q_w] \log \|\vep_j\|_w \bigr) = \bigl([l_w : \Q_w] \log \|\eta_j\|_w\bigr) C,
\end{equation}
where $w$ is an archimedean place of $l$, and $w$ indexes the rows of the matrices on both sides of (\ref{pf3}).
Let $\ee$ be the subgroup of $E_{l/k}$ generated by $\vep_1, \vep_2, \dots , \vep_{r(l/k)}$.  It follows from (\ref{pf3}) 
that the index of $\ee$ in $E_{l/k}$ is given by
\begin{equation}\label{pf5}
\bigl[E_{l/k} : \ee\bigr] = |\det C|.
\end{equation}

At each archimedean place $v$ of $k$ let $\widehat{w}_v$ be a place of $l$ such that $\widehat{w}_v | v$.  As in (\ref{unit32}), we write
\begin{equation*}\label{pf9}
M_{l/k} = \bigl([l_w : \Q_w] \log \|\eta_j\|_w\bigr),
\end{equation*}
for the $r(l/k) \times r(l/k)$ matrix, where $w$  is an archimedean place of $l$, but $w \not= \widehat{w}_v$ 
for each $v|\infty$, $w$ indexes rows, and $j = 1, 2, \dots , r(l/k)$ indexes columns.  Let
\begin{equation*}\label{pf13}
L(\ee) = \bigl([l_w : \Q_w] \log \|\vep_j\|_w\bigr)
\end{equation*}
be the analogous $r(l/k) \times r(l/k)$ matrix, where again $w$  is an archimedean place of $l$, but $w \not= \widehat{w}_v$ 
for each $v|\infty$, $w$ indexes rows, and $j = 1, 2, \dots , r(l/k)$ indexes columns.  From (\ref{pf3}) we get the matrix identity
\begin{equation}\label{pf17}
L(\ee) = M_{l/k} C.
\end{equation}
Then we combine (\ref{unit34}), (\ref{pf3}), (\ref{pf5}), and (\ref{pf17}), and conclude that
\begin{equation}\label{pf19}
\Reg\bigl(E_{l/k}\bigr) \bigl[E_{l/k} : \ee\bigr] = \bigl|\det L(\ee)\bigr|.
\end{equation}

To complete the proof we apply Schinzel's inequality (\ref{norm3}) to the determinant on the right of (\ref{pf19}).
We find that
\begin{align}\label{pf23}
\begin{split}
[l : \Q]^{-r(l/k)} \bigl|\det L(\ee)\bigr| &\le \prod_{j=1}^{r(l/k)} \bigg\{\hh\Bigl|\sum_{w \not= \widehat{w}_v} \log |\vep_j|_w\Bigr| 
			+ \hh \sum_{w \not= \widehat{w}_v} \bigl|\log |\vep_j|_w\bigr|\bigg\}\\
			                                   &= \prod_{j=1}^{r(l/k)} \bigg\{\hh\Bigl|\sum_{v|\infty} \log |\vep_j|_{\widehat{w}_v}\Bigr| 
			+ \hh \sum_{w \not= \widehat{w}_v} \bigl|\log |\vep_j|_w\bigr|\bigg\}\\
			                                   &\le \prod_{j=1}^{r(l/k)} \bigg\{\hh \sum_{v|\infty} \bigl|\log |\vep_j|_{\widehat{w}_v}\bigr| 
			+ \hh \sum_{w \not= \widehat{w}_v} \bigl|\log |\vep_j|_w\bigr|\bigg\}\\
			                                   &= \prod_{j = 1}^{r(l/k)} h(\vep_j).
\end{split}
\end{align}
Combining (\ref{pf19}) and the inequality (\ref{pf23}), leads to the bound (\ref{unit38}) in the statement of Theorem \ref{thmunit3}.


\end{document}